\documentclass[twoside]{article}
\headsep 0.5cm

\pagestyle{myheadings}

\usepackage{amsbsy}

\usepackage{mathptmx}
\usepackage[11pt]{moresize}
\usepackage[margin=3.5cm]{geometry}
\usepackage{amssymb,amsmath,latexsym, theorem}

\title{A note on solutions of the matrix equation $AXB=C$}

\author{I.V. Jovovi\' c and B.J. Male\v sevi\' c}

\date{18.07.2013.}

\markboth{I.V. Jovovi\' c and B.J. Male\v sevi\' c}
{A note on solutions of the matrix equation $AXB=C$}

\setcounter{page}{1}

\newcommand{\qed}{\hfill \Box\vspace*{4mm}}
\newtheorem{theorem}{Theorem}[section]
\newtheorem{lemma}[theorem]{Lemma}

\newtheorem{example}[theorem]{Example}

\newenvironment{proof}[1][Proof:]{\begin{trivlist}
\item[\hskip \labelsep {\bfseries #1}]}{\end{trivlist}}

\def\MRU#1{ \mathop{ #1^{\!\pmb{\urcorner}}} }
\def\MRD#1{ \mathop{ #1_{\!\pmb{\lrcorner}}} }
\def\MLU#1{ \mathop{ {}^{\pmb{\ulcorner}\!\!\!}#1 } }
\def\MLD#1{ \mathop{ {}_{\pmb{\llcorner}\!\!\!}#1 } }

\renewcommand{\arraystretch}{1.0} 

\begin{document}

\thispagestyle{empty}

\begin{flushleft}
{\footnotesize
{\sc SCIENTIFIC PUBLICATIONS OF THE STATE UNIVERSITY OF NOVI PAZAR \\}
\textsc{Ser. A: Appl. Math. Inform. and Mech.}}
\end{flushleft}

\vspace*{7mm}

\begin{center}
\textbf{{\Large A note on solutions of the matrix equation \textit{AXB$\,$=$\,$C}}}

\vspace*{3mm}

\textbf{{\large Ivana V. Jovovi\' c}}
$\!$\footnote{School of Electrical Engineering, University of Belgrade, Serbia,
e-mail: ivana@etf.rs}
$\,$\textbf{and}$\;$
\textbf{{\large Branko J. Male\v sevi\' c}}
$\!$\footnote{School of Electrical Engineering, University of Belgrade, Serbia,
e-mail: malesevic@etf.rs \\ \hspace*{5.0 mm}(corresponding author)}
\end{center}

\vspace*{5mm}

\noindent
\begin{minipage}[b]{145 mm}
\textbf{Abstract:}
This paper deals with necessary and sufficient condition for consistency of the matrix equation $AXB=C$. We will be concerned
with the minimal number of free parameters in Penrose's formula $X = A^{(1)} C B^{(1)} + Y - A^{(1)}A Y B B^{(1)}$ for obtaining
the general solution of the matrix equation and we will establish the relation between the minimal number of free parameters and
the ranks of the matrices $A$ and $B$. The solution is described in the terms of Rohde's general form of the $\{1\}$-inverse of
the matrices $A$ and $B$. We will also use Kronecker product to transform the matrix equation $AXB=C$ into the linear system
$(B^T \otimes A)\,vec\,X \,=\, vec\,C$.
\end{minipage}

\smallskip
\noindent
\textbf{Keywords:} Generalized inverses, Kronecker product, matrix equations, linear systems

\section{Introduction}

\medskip

In this paper we consider matrix equation

\vspace*{2.0 mm}

\noindent
\begin{equation}
\label{AXB=C}
AXB=C,
\end{equation}
where $X$ is an $n \times k$ matrix of unknowns,
      $A$ is an $m \times n$ matrix of rank $a$,
      $B$ is a  $k \times l$ matrix of rank $b$,
  and $C$ is an $m \times l$ matrix, all over $\mathbb{C}$.
The set of all $m \times n$ matrices over the complex field $\mathbb{C}$ will be denoted by $\mathbb{C}^{m \times n}$, $m, n \in \mathbb{N}$.
The set of all $m \times n$ matrices over the complex field $\mathbb{C}$ of rank $a$ will be denoted~by~$\mathbb{C}_{a}^{m \times n}$.
We will write $A_{i\rightarrow}$ ($A_{\downarrow j}$) for the $i^{th}$ row (the $j^{th}$ column) of the matrix $A \in \mathbb{C}^{m \times n}$
and $vec \, A$ will denote an ordered stock of columns of $A$, i.e.
$$
vec \, A
=
\left[
\begin{array}{c}
A_{\downarrow 1} \\
A_{\downarrow 2} \\
\vdots           \\
A_{\downarrow n}
\end {array}
\right].
$$
Using the Kronecker product of the matrices $B^T$ and $A$ we can transform the matrix equation (\ref{AXB=C}) into linear system

\vspace*{1.0 mm}

\noindent
\begin{equation}
\label{vec}
(B^T \otimes A)\, vec \, X = vec \, C.
\end{equation}
For the proof we refer the reader to A. Graham \cite{Graham81}. Necessary and sufficient condition for consistency
of the linear system $Ax = c$, as well as the minimal number of free parameters in Penrose's formula $x = A^{(1)}c + (I - A^{(1)}A)y$
has been considered in the paper B. Male\v sevi\' c, I. Jovovi\' c, M.~Makragi\' c and B.~Radi\v ci\' c \cite{MalesevicJovovicMakragicRadicic13}.
We will here briefly sketch this results in the case of the linear system (\ref{vec}).

\break

Any matrix $X$ satisfying the equality $AXA=A$ is called $\{1\}$-inverse of $A$ and is denoted by $A^{(1)}$.
For each matrix $A \in \mathbb{C}_{a}^{m \times n}$ there are regular matrices
$P \in \mathbb{C}^{n \times n}$ and $Q \in \mathbb{C}^{m \times m}$ such that
\begin{equation}
\label{QAP=Ea}
QAP
=
E_{A}
=
\left[
\begin{array}{c|c}
I_{a} & 0    \\\hline
0     & 0
\end {array}
\right],
\end{equation}
where $I_{a}$ is $a \times a$ identity matrix. It can be easily seen that every $\{1\}$-inverse of the matrix $A$ can be
represented in the Rohde's form
\begin{equation}
\label{A(1)}
A^{(1)} =
P\left[
\begin{array}{c|c}
I_{a} & U     \\\hline
V     & W
\end {array}
\right]Q \, ,
\end{equation}
where $U=[u_{ij}]$, $V=[v_{ij}]$ and $W=[w_{ij}]$ are arbitrary matrices of corresponding dimensions $a \times (m-a)$,
$(n-a) \times a$ and $(n-a) \times (m-a)$ with mutually independent entries, see C. Rohde \cite{Rohde64}
and V. Peri\' c \cite{Peric82}. We will explore the minimal numbers of free parameters
in Penrose's formula $$X = A^{(1)} C B^{(1)} + Y - A^{(1)}A Y B B^{(1)}$$
for obtaining the general solution of the matrix equation (\ref{AXB=C}).
Some similar considerations can be found in papers B. Male\v sevi\' c and B.~Radi\v ci\' c
\cite{MalesevicRadicic11}, \cite{MalesevicRadicic11a}, \cite{MalesevicRadicic12} and \cite{MalesevicRadicic11b}.

\section{Matrix equation \textbf{\textit{AXB$\,$=$\,$C}} and the Kronecker product of the matrices
$\mbox{\textbf{\textit{B}}}^T\!$ and $\mbox{\textbf{\textit{A}}}$}

\medskip

The Kronecker product of matrices $A=[a_{ij}]_{m \times n} \in \mathbb{C}^{m \times n}$
and $B=[b_{ij}]_{k \times l} \in \mathbb{C}^{k \times l}$, denoted by $A \otimes B$,
is defined as block matrix
$$
A \otimes B =
\left[
\begin{array}{cccc}
a_{11}B & a_{12}B & \ldots & a_{1n}B    \\
a_{21}B & a_{22}B & \ldots & a_{2n}B    \\
\vdots  & \vdots  & \ddots & \vdots     \\
a_{m1}B & a_{m2}B & \ldots & a_{mn}B
\end {array}
\right].
$$
The matrix $A \otimes B$ is $mk \times nl$ matrix with $mn$ blocks $a_{ij}B$ of order $k \times l$.
Here we will mention some properties and rules for the Kronecker product.
Let $A \in \mathbb{C}^{m \times n}$, $B \in \mathbb{C}^{k \times l}$,
$C \in \mathbb{C}^{n \times r}$ and $D \in \mathbb{C}^{l \times s}$.
Then the following propositions holds:
\begin{itemize}
  \item $A^T \otimes B^T = (A \otimes B)^T$;
  \item $rank(A \otimes B) = rank(A) \; rank(B)$;
  \item $(A \otimes B) \cdot (C \otimes D) = (A \cdot C) \otimes (B \cdot D)\;\;$(mixed product rule);
  \item if $A$ and $B$ are regular $n \times n$ and $k \times k$  matrices, then $(A \otimes B)^{-1}=A^{-1} \otimes B^{-1}$.
\end{itemize}
The proof of these facts can be found in A. Graham \cite{Graham81} and  A. Ben--Israel and T.N.E. Greville~\cite{Ben--IsraelGreville03}.

\noindent
Matrix $A^{(1)} \otimes B^{(1)}$ is $\{1\}$-inverse of $A \otimes B$.
Using mixed product rule we have
$$
(A \otimes B)(A^{(1)} \otimes B^{(1)})(A \otimes B)=(A \cdot A^{(1)} \cdot A) \otimes (B \cdot B^{(1)} \cdot B) = A \otimes B \, .
$$

\noindent
Let $R \in \mathbb{C}^{k \times k}$  and $S \in \mathbb{C}^{l \times l}$ be regular matrices
such that
\begin{equation}
\label{RBS=Eb}
RBS
=
E_{B}
=
\left[
\begin{array}{c|c}
I_{b} & 0    \\\hline
0     & 0
\end {array}
\right].
\end{equation}

\break

\noindent
An $\{1\}$-inverse of the matrix $B$ can be represented in the Rohde's form
\begin{equation}
\label{B(1)}
B^{(1)} =
S\left[
\begin{array}{c|c}
I_{b} & M     \\\hline
N     & K
\end {array}
\right]R \, ,
\end{equation}
where $M=[m_{ij}]$, $N=[n_{ij}]$ and $K=[k_{ij}]$
are arbitrary matrices of corresponding dimensions $b \times (k-b)$, $(l-b) \times b$ and $(l-b) \times (k-b)$
with mutually independent entries.

\medskip

From now on, we will look more closely at the linear system (\ref{vec}).

\smallskip
\noindent
Firstly, by mixed product rule we obtain
$$
(S^T \otimes Q) \cdot (B^T \otimes  A) \cdot (R^T \otimes P)
=
(S^T \cdot B^T \cdot R^T) \otimes (Q \cdot A \cdot P)
=
{E_{B^T}} \otimes E_A \, .
$$
Unfortunately, the matrix
$$
{E_{B^T}} \otimes E_A
=
\left[
\begin{array}{c|c}
I_{b} \otimes E_A & 0     \\\hline
0                 & 0
\end {array}
\right]
=
\left[
\begin{array}{c|c}
\begin{array}{cccc}
E_{A} &       &        &      \\
      & E_{A} &        & 0    \\
      &       & \ddots &      \\
      & 0     &        & E_A  \\
\end {array}       &    0   \\\hline
0                  &    0
\end {array}
\right]
=
\left[
\begin{array}{c|c}
\begin{array}{ccccccc}
I_{a} & 0 &       &   &        &       &     \\
0     & 0 &       &   &        & 0     &     \\
      &   & I_{a} & 0 &        &       &     \\
      &   & 0     & 0 &        &       &     \\
      &   &       &   & \ddots &       &     \\
      & 0 &       &   &        & I_{a} & 0   \\
      &   &       &   &        & 0     & 0   \\
\end {array}       &    0   \\\hline
0                  &    0
\end {array}
\right]
$$
is not of the needed form $E_{B^T \otimes A}$. Equality ${E_{B^T}} \otimes E_A = E_{B^T \otimes A}$ holds for $b=1$.
Swapping the rows and the columns corresponding to blocks $I_a$ and to zero diagonal blocks we get required matrix $E_{B^T \otimes A}$.
If matrices $D$ and $G$ are the elementary matrices obtained by swapping rows and columns corresponding to mentioned blocks
of the identity matrices, then $D \cdot ({E_{B^T}} \otimes E_A) \cdot G = E_{B^T \otimes A}$.
Thus, we have
\begin{equation}
\label{Eab}
(D \cdot(S^T \otimes Q)) \cdot (B^T \otimes  A) \cdot ((R^T \otimes P) \cdot G) = E_{B^T \otimes A}
\end{equation}
and so an $\{1\}$-inverse of the matrix $B^T \otimes A$ can be represented in the Rohde's form
\begin{equation}
\label{BTkronekerA(1)}
(B^T \otimes A)^{(1)} =
(R^T \otimes P) \cdot G \cdot \left[
\begin{array}{c|c}
I_{ab} & F     \\ \hline
H      & L
\end {array}
\right] \cdot D \cdot(S^T \otimes Q) \, ,
\end{equation}
where $F=[f_{ij}]$, $H=[h_{ij}]$ and $L=[l_{ij}]$ are arbitrary matrices of corresponding dimensions $ab \times (ml-ab)$, $(nk-ab) \times ab$
and $(nk-ab) \times (ml-ab)$ with mutually independent entries. If the matrices $A$ and $B$ are square matrices, then $D=G^T$.

For the simplicity of notation, we will write  $\overline{c}_{\,a}$ ($\,\underline{c}_{\,a}$\,) for the submatrix
corresponding to the first (the last) $a$ coordinates of the vector $c$.
Now, we can rephrase Lemma 2.1. and Theorem 2.2. from the paper
B. Male\v sevi\' c, I. Jovovi\' c, M.~Makragi\' c and B.~Radi\v ci\' c \cite{MalesevicJovovicMakragicRadicic13}
in the case of the linear system (\ref{vec}). Let $C\,'$ be given by $C\,'=QCS$. Then $vec \, C\,' = (S^T \otimes Q) \; vec \, C$.

\begin{lemma}
\label{vecC'}
The linear system {\rm (\ref{vec})} has a solution if and only if the last $ml-ab$ coordinates of the vector $c'' = D \; vec \, C\,'$
are zeros, where $D$ is elementary matrix such that {\rm (\ref{Eab})} holds.
\end{lemma}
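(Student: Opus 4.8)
The plan is to use the canonical reduction (\ref{Eab}) to bring the coefficient matrix of the system (\ref{vec}) to the form $E_{B^T \otimes A}$, from which the consistency condition can be read off directly from the block structure. The guiding observation is that left-multiplying a linear system by a regular matrix, together with performing an invertible change of the unknown, leaves the solvability of the system unchanged; this is what lets us replace $B^T \otimes A$ by its simple canonical form without affecting consistency.

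First I would left-multiply the system (\ref{vec}) by the regular matrix $D \cdot (S^T \otimes Q)$. Since this matrix is invertible, the resulting system has exactly the same set of solutions $vec\, X$, so one is consistent if and only if the other is. By the definitions $vec\, C\,' = (S^T \otimes Q)\, vec\, C$ and $c\,'' = D\, vec\, C\,'$, the right-hand side becomes precisely $c\,''$, while the left-hand side becomes $(D \cdot (S^T \otimes Q)) \cdot (B^T \otimes A)\, vec\, X$.

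Next I would introduce the change of variables $vec\, X = (R^T \otimes P) \cdot G \cdot z$. As $(R^T \otimes P)\cdot G$ is a product of regular matrices it is invertible, so $vec\, X$ ranges over all of $\mathbb{C}^{nk}$ exactly when $z$ does; hence the transformed system is solvable if and only if the original one is. Applying (\ref{Eab}), the coefficient matrix collapses to $E_{B^T \otimes A}$, and the system reduces to $E_{B^T \otimes A}\, z = c\,''$.

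Finally I would read off the conclusion from the block form of $E_{B^T \otimes A}$, which carries the identity block $I_{ab}$ in its top-left corner and is zero in its remaining $ml-ab$ rows. The first $ab$ equations merely set $\overline{z}_{\,ab} = \overline{c\,''}_{\,ab}$, so they can always be satisfied by choosing the leading coordinates of $z$ and impose no constraint, whereas the last $ml - ab$ equations read $0 = \underline{c\,''}_{\,ml-ab}$. Therefore the system is consistent if and only if these last $ml - ab$ coordinates of $c\,''$ vanish, which is exactly the assertion. The argument is essentially bookkeeping; the only point that genuinely needs care is confirming that both the left multiplication by $D \cdot (S^T \otimes Q)$ and the substitution by $(R^T \otimes P)\cdot G$ preserve solvability, which holds because each factor — the permutation matrices $D$ and $G$ and the Kronecker products of regular matrices — is regular.
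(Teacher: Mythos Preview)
Your argument is correct: reducing the system to $E_{B^T\otimes A}\,z=c''$ via the regular transformations on the left and the invertible change of unknown on the right, and then reading off the consistency condition from the block structure, is exactly the standard route and every step is justified.

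Note, however, that the paper does not actually supply its own proof of this lemma. It is stated as a direct specialization of Lemma~2.1 from the cited paper of Male\v{s}evi\'{c}, Jovovi\'{c}, Makragi\'{c} and Radi\v{c}i\'{c}~\cite{MalesevicJovovicMakragicRadicic13} to the particular linear system~(\ref{vec}); your proof is the natural argument underlying that cited result. The paper later revisits the same conclusion after Lemma~\ref{C'=QCS} by a different path --- it first shows directly that $C'=QCS$ has its last $m-a$ rows and last $l-b$ columns equal to zero, and then observes that under $vec$ and the permutation $D$ these zero blocks are shuffled into the last $ml-ab$ positions of $c''$ --- but that is presented as an alternative derivation rather than as the proof of the lemma itself.
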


\break

\begin{theorem}
\label{main theorem Kronecker}
The vector
\begin{equation}
\label{Penrose}
vec \, X = (B^T \otimes A)^{(1)}vec\, C + (I-(B^T \otimes A)^{(1)} \cdot (B^T \otimes A)) y,
\end{equation}
$\,y \in \mathbb{C}^{nk \times 1}\,$ is an arbitrary column, is the general solution of the system {\rm \,(\ref{vec})\,},
if and only if the $\{1\}$-inverse $(B^T \otimes A)^{(1)}$ of the system matrix $B^T \otimes A$ has the form {\rm(\ref{BTkronekerA(1)})}
for arbitrary matrices $F$ and $L$ and the rows of the matrix $H(\overline{c}\,''_{ab}-\overline{y}\,'_{ab}) + \underline{y}\,'_{nk-ab}$
are free parameters, where $D \cdot(S^T \otimes Q) vec\, C
\,=\, c'' \,=\,
\left[
\begin{array}{c}
\overline{c}\,''_{ab}      \\\hline
0
\end {array}
\right]$
and
$G^{-1} \cdot ((R^{-1})^T \otimes P^{-1})y
\,=
y\,'
\,=\,
\left[
\begin{array}{c}
\overline{y}\,'_{ab}      \\\hline
\underline{y}\,'_{nk-ab}
\end {array}
\right]$.
\end{theorem}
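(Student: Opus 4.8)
The plan is to specialize the general solvability theorem for linear systems (Theorem~2.2 of \cite{MalesevicJovovicMakragicRadicic13}) to the system (\ref{vec}), whose coefficient matrix is $M = B^T \otimes A$ of rank $ab$. The bridge is relation (\ref{Eab}): the regular matrices $\tilde Q = D \cdot (S^T \otimes Q)$ and $\tilde P = (R^T \otimes P) \cdot G$ satisfy $\tilde Q M \tilde P = E_{B^T \otimes A}$, so the triple $(M,\tilde Q,\tilde P)$ plays exactly the role of $(A,Q,P)$ in the canonical reduction, with $M^{(1)}$ in the Rohde form (\ref{BTkronekerA(1)}). First I would record the two identities I need: $\tilde P^{-1} = G^{-1}((R^{-1})^T \otimes P^{-1})$, so that $y\,' = \tilde P^{-1} y$ is precisely the vector named in the statement, and $c\,'' = \tilde Q\, vec\, C$, whose last $ml - ab$ coordinates vanish by Lemma~\ref{vecC'} exactly when the system is consistent, i.e. $c\,'' = \left[\begin{smallmatrix}\overline{c}\,''_{ab}\\ 0\end{smallmatrix}\right]$.

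Next I would substitute the Rohde $\{1\}$-inverse into the two summands of (\ref{Penrose}) and push everything into the transformed coordinates. Since the lower part of $c\,''$ is zero, the product $M^{(1)} vec\, C = \tilde P \bigl[\begin{smallmatrix} I_{ab} & F \\ H & L \end{smallmatrix}\bigr] c\,''$ collapses to $\tilde P \left[\begin{smallmatrix}\overline{c}\,''_{ab}\\ H\overline{c}\,''_{ab}\end{smallmatrix}\right]$, so the blocks $F$ and $L$ are annihilated. Likewise, writing $\tilde Q M = E_{B^T \otimes A}\,\tilde P^{-1}$ gives $M^{(1)} M = \tilde P \bigl[\begin{smallmatrix} I_{ab} & 0 \\ H & 0 \end{smallmatrix}\bigr]\tilde P^{-1}$ and hence $I - M^{(1)} M = \tilde P \bigl[\begin{smallmatrix} 0 & 0 \\ -H & I \end{smallmatrix}\bigr]\tilde P^{-1}$, so that $(I - M^{(1)} M)y = \tilde P \left[\begin{smallmatrix} 0 \\ -H\overline{y}\,'_{ab} + \underline{y}\,'_{nk-ab}\end{smallmatrix}\right]$. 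Adding the two contributions yields the compact form
$$
vec\, X = \tilde P \begin{bmatrix} \overline{c}\,''_{ab} \\ H(\overline{c}\,''_{ab} - \overline{y}\,'_{ab}) + \underline{y}\,'_{nk-ab} \end{bmatrix}.
$$

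I would then compare this with the canonical description of the solution set, $vec\, X = \tilde P \left[\begin{smallmatrix}\overline{c}\,''_{ab}\\ t\end{smallmatrix}\right]$ with $t \in \mathbb{C}^{(nk-ab)\times 1}$ completely free. Because $\underline{y}\,'_{nk-ab}$ already runs over all of $\mathbb{C}^{(nk-ab)\times 1}$, the lower block $H(\overline{c}\,''_{ab} - \overline{y}\,'_{ab}) + \underline{y}\,'_{nk-ab}$ sweeps out the whole free part, so (\ref{Penrose}) does produce the general solution, and its $nk - ab$ entries are exactly the independent parameters, the minimal possible number $nk - rank(B^T \otimes A)$. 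The same computation delivers both directions of the equivalence: the formula reproduces the general solution with this minimal parametrization precisely when $M^{(1)}$ is taken in the Rohde form (\ref{BTkronekerA(1)}) with $F$ and $L$ (and $\overline{y}\,'_{ab}$) arbitrary, since these drop out, and the genuine free parameters are then the entries of the displayed lower block.

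The main obstacle I anticipate is bookkeeping rather than conceptual: keeping the permutation factors $D$ and $G$ in their correct places so that $\tilde Q M \tilde P = E_{B^T \otimes A}$, and not merely ${E_{B^T}} \otimes E_A$, and verifying that $\tilde P^{-1}$ indeed equals the operator $G^{-1}((R^{-1})^T \otimes P^{-1})$ defining $y\,'$. The one substantive point to check is that $F$ and $L$ genuinely disappear; this rests on the consistency form $c\,'' = \left[\begin{smallmatrix}\overline{c}\,''_{ab}\\ 0\end{smallmatrix}\right]$ from Lemma~\ref{vecC'} together with the block shape $\bigl[\begin{smallmatrix} I_{ab} & 0 \\ 0 & 0 \end{smallmatrix}\bigr]$ of $E_{B^T \otimes A}$, both of which hold only after the row and column swaps encoded by $D$ and $G$.
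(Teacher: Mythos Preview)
Your proposal is correct and follows exactly the route the paper takes: the theorem is obtained by specializing Theorem~2.2 of \cite{MalesevicJovovicMakragicRadicic13} to the linear system (\ref{vec}) via the regular matrices $\tilde Q = D\cdot(S^T\otimes Q)$ and $\tilde P = (R^T\otimes P)\cdot G$ furnished by (\ref{Eab}), together with the Rohde form (\ref{BTkronekerA(1)}). In fact you supply more detail than the paper itself, which merely cites \cite{MalesevicJovovicMakragicRadicic13} and records the resulting compact form of $vec\,X$; your block computations showing that $F$ and $L$ drop out and that the lower block equals $H(\overline{c}\,''_{ab}-\overline{y}\,'_{ab})+\underline{y}\,'_{nk-ab}$ are precisely what underlies that citation.
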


In the paper B. Male\v sevi\' c, I. Jovovi\' c, M.~Makragi\' c and B.~Radi\v ci\' c \cite{MalesevicJovovicMakragicRadicic13}
we have seen that general solution (\ref{Penrose}) can be presented in the form
\begin{equation}
vec \, X
=
(R^T \otimes P) \cdot G \cdot\!
\left[
\begin{array}{c}
\overline{c}\,''_{ab}                                           \\ \hline
H(\overline{c}\,''_{ab} - \overline{y}\,'_{ab}) + \underline{y}\,'_{nk-ab}
\end {array}
\right].
\end{equation}

We illustrate this formula in the next example.

\vspace*{-2.5 mm}

\begin{example}
\label{Example}
We consider the matrix equation
$$
AXB=C,
$$
where
$A =
\left[
\begin{array}{rr}
1 & 2 \\
0 & 1 \\
1 & 1
\end{array}
\right]$,
$X =
\left[
\begin{array}{rrr}
x_{11} & x_{12} & x_{13}   \\
x_{21} & x_{22} & x_{23}
\end{array}
\right]$,
$B =
\left[
\begin{array}{rrr}
1 & 0 & 0 \\
0 & 1 & 1 \\
1 & 1 & 1
\end{array}
\right]$
and
$C =
\left[
\begin{array}{rrr}
-3 & -6 & -6 \\
-1 & -2 & -2 \\
-2 & -4 & -4
\end{array}
\right]$.

\medskip
Using the Kronecker product the matrix equation may be considered in the form of the equivalent linear system
$$
(B^T \otimes A) \cdot vec \, X = vec \, C,
$$
i.e.
$$
\left[
\begin{array}{rrrrrr}
1 & 2 & 0 & 0 & 1 & 2 \\
0 & 1 & 0 & 0 & 0 & 1 \\
1 & 1 & 0 & 0 & 1 & 1 \\
0 & 0 & 1 & 2 & 1 & 2 \\
0 & 0 & 0 & 1 & 0 & 1 \\
0 & 0 & 1 & 1 & 1 & 1 \\
0 & 0 & 1 & 2 & 1 & 2 \\
0 & 0 & 0 & 1 & 0 & 1 \\
0 & 0 & 1 & 1 & 1 & 1
\end{array}
\right]
\cdot
\left[
\begin{array}{r}
x_{11} \\
x_{21} \\
x_{12} \\
x_{22} \\
x_{13} \\
x_{23}
\end{array}
\right]  =
\left[
\begin{array}{r}
-3 \\
-1 \\
-2 \\
-6 \\
-2 \\
-4 \\
-6 \\
-2 \\
-4
\end{array}
\right].
$$
Matrices
$Q
=\!
\left[
\begin{array}{rrr}
\! 1 & 0 & 0    \\
\! 0 & 1 & 0    \\
\!-1 & 1 & 1
\end{array}
\right]$
and
$P
=\!
\left[
\begin{array}{rr}
1 & \!-2    \\
0 & \! 1
\end{array}
\right]$
are regular matrices such that
$QAP
=
E_A
=
\left[
\begin{array}{rr}
1 & 0   \\
0 & 1   \\
0 & 0
\end{array}
\right]$
and matrices
$R
=
\left[
\begin{array}{rrr}
\! 1 & \! 0 & 0    \\
\! 0 & \! 1 & 0   \\
\!-1 & \!-1 & 1
\end{array}
\right]$
and
$S =
\left[
\begin{array}{rrr}
1 & 0 & \! 0 \\
0 & 1 & \!-1 \\
0 & 0 & \! 1
\end{array}
\right]$
are regular matrices such that
$
RBS
=
$
$
E_B
=
\left[
\begin{array}{rrr}
1 & 0 & 0 \\
0 & 1 & 0  \\
0 & 0 & 0
\end{array}
\right]$.
Therefore,
$$
{E_{B^T}} \otimes E_A =
\left[
\begin{array}{rrrrrr}
1 & 0 & 0 & 0 & 0 & 0 \\
0 & 1 & 0 & 0 & 0 & 0 \\
0 & 0 & 0 & 0 & 0 & 0 \\
0 & 0 & 1 & 0 & 0 & 0 \\
0 & 0 & 0 & 1 & 0 & 0 \\
0 & 0 & 0 & 0 & 0 & 0 \\
0 & 0 & 0 & 0 & 0 & 0 \\
0 & 0 & 0 & 0 & 0 & 0 \\
0 & 0 & 0 & 0 & 0 & 0
\end{array}
\right]
\mbox{\;and for matrix\;\;}
D =
\left[
\begin{array}{rrrrrrrrr}
1 & 0 & 0 & 0 & 0 & 0 & 0 & 0 & 0 \\
0 & 1 & 0 & 0 & 0 & 0 & 0 & 0 & 0 \\
0 & 0 & 0 & 1 & 0 & 0 & 0 & 0 & 0 \\
0 & 0 & 0 & 0 & 1 & 0 & 0 & 0 & 0 \\
0 & 0 & 1 & 0 & 0 & 0 & 0 & 0 & 0 \\
0 & 0 & 0 & 0 & 0 & 1 & 0 & 0 & 0 \\
0 & 0 & 0 & 0 & 0 & 0 & 1 & 0 & 0 \\
0 & 0 & 0 & 0 & 0 & 0 & 0 & 1 & 0 \\
0 & 0 & 0 & 0 & 0 & 0 & 0 & 0 & 1
\end{array}
\right]
$$
we have
$$
E_{B^T \otimes A}
=
D \cdot ({E_{B^T}} \otimes E_A)
=
\left[
\begin{array}{rrrrrr}
1 & 0 & 0 & 0 & 0 & 0 \\
0 & 1 & 0 & 0 & 0 & 0 \\
0 & 0 & 1 & 0 & 0 & 0 \\
0 & 0 & 0 & 1 & 0 & 0 \\
0 & 0 & 0 & 0 & 0 & 0 \\
0 & 0 & 0 & 0 & 0 & 0 \\
0 & 0 & 0 & 0 & 0 & 0 \\
0 & 0 & 0 & 0 & 0 & 0 \\
0 & 0 & 0 & 0 & 0 & 0
\end{array}
\right].
$$
Let us remark that $E_{B^T \otimes A} = (D \cdot (S^T \otimes Q)) \cdot (B^T \otimes  A) \cdot (R^T \otimes P)$, and hence
according to the Theorem~\ref{main theorem Kronecker}
$vec \, X
=
(R^T \otimes P) \cdot
\left[
\begin{array}{c}
\overline{c}\,''_{4}                                       \\ \hline
H(\overline{c}\,''_{4} - \overline{y}\,'_{4}) + \underline{y}\,'_{2}
\end {array}
\right]$
is the general solution of the linear system iff elements of the column
$H(\overline{c}\,''_{4}-\overline{y}\,'_{4}) + \underline{y}\,'_{2}$
are two mutually independent parameters $\alpha_1$ i $\alpha_2$ for the~vector
$$
\begin{array}{rcl}
c\,''
\!\!&\!\!=\!\!&\!\!
\left[
\begin{array}{c}
\overline{c}\,''_{4}                \\ \hline
0
\end{array}
\right] =
(D \cdot (S^T \otimes Q)) \cdot vec \, C
=                                  \\[2.5ex]
\!\!&\!\!=\!\!&\!\!
\left[
\begin{array}{rrrrrrrrr}
 1 & 0 & 0 & 0 & 0 & 0 & 0 & 0 & 0 \\
 0 & 1 & 0 & 0 & 0 & 0 & 0 & 0 & 0 \\
 0 & 0 & 0 & 1 & 0 & 0 & 0 & 0 & 0 \\
 0 & 0 & 0 & 0 & 1 & 0 & 0 & 0 & 0 \\
-1 & 1 & 1 & 0 & 0 & 0 & 0 & 0 & 0 \\
 0 & 0 & 0 &-1 & 1 & 1 & 0 & 0 & 0 \\
 0 & 0 & 0 &-1 & 0 & 0 & 1 & 0 & 0 \\
 0 & 0 & 0 & 0 &-1 & 0 & 0 & 1 & 0 \\
 0 & 0 & 0 & 1 &-1 &-1 &-1 & 1 & 1
\end{array}
\right]
\cdot
\left[
\begin{array}{r}
-3 \\
-1 \\
-2 \\
-6 \\
-2 \\
-4 \\
-6 \\
-2 \\
-4
\end{array}
\right]
\;=\;
\left[
\begin{array}{r}
-3 \\
-1 \\
-6 \\
-2 \\\hline
 0 \\
 0 \\
 0 \\
 0 \\
 0
\end{array}
\right].
\end{array}
$$
Finally, the general solution of the linear system is
$$
vec \, X
=
(R^T \otimes P) \cdot\!
\left[
\begin{array}{r}
-3 \\
-1 \\
-6 \\
-2 \\ \hline
\alpha_{1} \\
\alpha_{2}
\end{array}
\right]
 =
\left[
\begin{array}{rrrrrr}
1 &-2 & 0 & 0 &-1 & 2 \\
0 & 1 & 0 & 0 & 0 &-1 \\
0 & 0 & 1 &-2 &-1 & 2 \\
0 & 0 & 0 & 1 & 0 &-1 \\
0 & 0 & 0 & 0 & 1 &-2 \\
0 & 0 & 0 & 0 & 0 & 1
\end{array}
\right]
\cdot
\left[
\begin{array}{r}
-3 \\
-1 \\
-6 \\
-2 \\ \hline
\alpha_{1} \\
\alpha_{2}
\end{array}
\right]
 =
\left[
\begin{array}{r}
-1         -  \alpha_{1} + 2\alpha_{2}  \\
-1         -  \alpha_{2}                \\
-2         -  \alpha_{1} + 2\alpha_{2}  \\
-2         -  \alpha_{2}                \\
\alpha_{1} - 2\alpha_{2}                \\
\alpha_{2}
\end{array}
\right].
$$
\end{example}

\break

\section{Matrix equation \textbf{\textit{AXB$\,$=$\,$C}} and the \textbf{\{1\}}--$\,$inverses
of the matrices \textbf{\textit{A}}~and~\textbf{\textit{B}}}

\medskip

In this section we indicate how technique of an $\{1\}$-inverse may be used to obtain the necessary
and sufficient condition for an existence of a general solution of the matrix equation (\ref{AXB=C})
without using Kronecker product.
We will use the symbols $\MLU{C}_{a,b}$, $\MLD{C}_{a,b}$, $\MRU{C}_{a,b}$ and $\MRD{C}_{a,b}$
for the submatrices of the matrix $C$ corresponding to the first $a$ rows and $b$ columns,
the last $a$ rows and the first $b$ columns, the first $a$ rows and the last $b$ columns,
the last $a$ rows and $b$ columns, respectively.
\begin{lemma}
\label{C'=QCS}
The matrix equation {\rm (\ref{AXB=C})} has a solution if and only if
the last $m-a$ rows and $l-b$ columns of the matrix $C\,'=QCS$ are zeros, where
$Q \in \mathbb{C}^{m \times m}$ and $S \in \mathbb{C}^{l \times l}$
are regular matrices such that {\rm(\ref{QAP=Ea})} and {\rm(\ref{RBS=Eb})} hold.
\end{lemma}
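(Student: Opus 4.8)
The plan is to absorb the four regular transition matrices into the right-hand side and then read off the condition from a single block computation. First I would use (\ref{QAP=Ea}) and (\ref{RBS=Eb}) to write $A = Q^{-1}E_A P^{-1}$ and $B = R^{-1}E_B S^{-1}$. Substituting these into (\ref{AXB=C}) and multiplying on the left by $Q$ and on the right by $S$ turns the equation into
$$
E_A\,\bigl(P^{-1} X R^{-1}\bigr)\,E_B \;=\; QCS \;=\; C'.
$$
Since $P$ and $R$ are regular, the substitution $Y := P^{-1} X R^{-1}$ is a bijection of $\mathbb{C}^{n\times k}$ onto itself, so (\ref{AXB=C}) is solvable in $X$ precisely when $E_A\,Y\,E_B = C'$ is solvable in $Y$.

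The second step is the block computation. I would partition $Y$ conformally, with a leading $a\times b$ block $Y_{11}$ and the remaining three blocks of complementary sizes. Carrying out the two multiplications with $E_A$ and $E_B$ annihilates every block that meets a zero row or column of the $E$-matrices, leaving
$$
E_A\,Y\,E_B \;=\;
\left[
\begin{array}{c|c}
Y_{11} & 0 \\\hline
0      & 0
\end{array}
\right],
$$
an $m\times l$ matrix whose only possibly nonzero block is the $a\times b$ corner $Y_{11}$, which may moreover be prescribed arbitrarily.

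Comparing this with the conformal partition of $C'$ finishes the argument: the equation forces the leading $a\times b$ block of $C'$ to equal $Y_{11}$, which imposes no constraint, while the other three blocks of $C'$ must all vanish. Those three blocks are exactly the entries lying in the last $m-a$ rows or in the last $l-b$ columns of $C'$, so their vanishing is the asserted criterion. I do not expect a genuine obstacle here; the only point demanding attention is bookkeeping of the four block sizes $a,\,m-a$ for rows and $b,\,l-b$ for columns, so that ``last $m-a$ rows and $l-b$ columns are zero'' is recognized as precisely the statement that $C'$ has the shape $\left[\begin{smallmatrix} * & 0 \\ 0 & 0\end{smallmatrix}\right]$ with $*$ of size $a\times b$.
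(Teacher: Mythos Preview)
Your argument is correct, but it follows a different route from the paper's. The paper does not reduce the equation by change of variables; instead it invokes Penrose's consistency criterion $C = AA^{(1)}CB^{(1)}B$ and then computes $AA^{(1)}$ and $B^{(1)}B$ from the Rohde forms (\ref{A(1)}) and (\ref{B(1)}), obtaining $AA^{(1)} = Q^{-1}\left[\begin{smallmatrix} I_a & U \\ 0 & 0\end{smallmatrix}\right]Q$ and $B^{(1)}B = S\left[\begin{smallmatrix} I_b & 0 \\ N & 0\end{smallmatrix}\right]S^{-1}$. Conjugating the criterion by $Q$ and $S$ and multiplying out the three block matrices then forces the three off-corner blocks of $C'$ to vanish. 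Your approach is more elementary and entirely self-contained: by absorbing $P^{-1}XR^{-1}$ into a new unknown $Y$ you avoid any appeal to generalized inverses or to Penrose's theorem, and the block computation with $E_A$ and $E_B$ is shorter. What the paper's approach buys is thematic continuity: the same Rohde blocks $U,V,W,M,N,K$ reappear in the proof of Theorem~\ref{main theorem}, so computing $AA^{(1)}$ and $B^{(1)}B$ here is not wasted effort.
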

\begin{proof}
The matrix equation (\ref{AXB=C}) has a solution if and only if $C=AA^{(1)}CB^{(1)}B$, see R.~Penrose \cite{Penrose55}.
Since $A^{(1)}$ and $B^{(1)}$ are described by the equations (\ref{A(1)}) and (\ref{B(1)}), it follows that
$$
AA^{(1)}
=
AP\left[
\begin{array}{c|c}
I_{a} & U     \\ \hline
V     & W
\end{array}
\right]Q
 =
Q^{-1}\left[
\begin{array}{c|c}
I_{a} & U     \\ \hline
0     & 0
\end{array}
\right]Q
$$
and
$$
B^{(1)}B =
S\left[
\begin{array}{c|c}
I_{b} & M     \\\hline
N     & K
\end{array}
\right]RB
 =
S\left[
\begin{array}{c|c}
I_{b} & 0     \\\hline
N     & 0
\end{array}
\right]S^{-1}.
$$
Hence, since $Q$ and $S$ are regular matrices we have the following equivalences
$$
\!\!\!\!\!\!\!\!\!
\begin{array}{lcl}
C
\!=\!
AA^{(1)}CB^{(1)}B
\!\!&\!\!\Longleftrightarrow\!\!&\!\!
QCS
\!=\!
QAA^{(1)}CB^{(1)}BS
\;\;\;\;\mathop{\Longleftrightarrow}\limits^{^{{\mbox {\tiny$C\,'=QCS$}}}}\;\;\;\;
C\,'
\!=\!
\left[
\begin{array}{c|c}
I_{a} & U     \\ \hline
0     & 0
\end{array}
\right]
C\,'
\left[
\begin{array}{c|c}
I_{b} & 0     \\ \hline
N     & 0
\end{array}
\right]                                                         \\[3ex]
\!\!\!\!&\!\!\!\!\Longleftrightarrow\!\!\!\!&\!\!\!\!
\left[
\begin{array}{c|c}
\MLU{C\,'}_{a, b}      & \MRU{C\,'}_{a, l-b}     \\ \hline
\MLD{C\,'}_{m-a, b}    & \MRD{C\,'}_{m-a, l-b}
\end{array}
\right]=
\left[
\begin{array}{c|c}
I_{a} & U     \\ \hline
0     & 0
\end{array}
\right]
\left[
\begin{array}{c|c}
\MLU{C\,'}_{a, b}       & \MRU{C\,'}_{a, l-b}     \\ \hline
\MLD{C\,'}_{m-a, b}     & \MRD{C\,'}_{m-a, l-b}
\end{array}
\right]
\left[
\begin{array}{c|c}
I_{b} & 0     \\ \hline
N     & 0
\end{array}
\right]                                                     \\[3ex]
\!\!\!\!&\!\!\!\!\Longleftrightarrow\!\!\!\!&\!\!\!\!
\left[
\begin{array}{c|c}
\MLU{C\,'}_{a, b}       & \MRU{C\,'}_{a, l-b}     \\ \hline
\MLD{C\,'}_{m-a, b}     & \MRD{C\,'}_{m-a, l-b}
\end{array}
\right]
\!=\!
\left[
\begin{array}{c|c}
\MLU{C\,'}_{\!a, b} + U \MLD{C\,'}_{\!m-a, b} + \MRU{C\,'}_{\!a, l-b} N + U \MRD{C\,'}_{n-a, l-b} N  & 0     \\ \hline
0                                                                                                      & 0
\end{array}
\right],
\end{array}
$$
for
$
\,C\,'
=
\left[
\begin{array}{c|c}
\MLU{C\,'}_{a, b}       & \MRU{C\,'}_{a, l-b}     \\ \hline
\MLD{C\,'}_{m-a, b}     & \MRD{C\,'}_{m-a, l-b}
\end{array}
\right]
$.
Therefore, we conclude
$$
C = AA^{(1)}CB^{(1)}B
\;\;\Longleftrightarrow\;\;
\mbox{$\MRU{C\,'}_{a, l-b}$}=0
\;\wedge\;
\mbox{$\MLD{C\,'}_{m-a, b}$}=0
\;\wedge\;
\mbox{$\MRD{C\,'}_{\, m-a, l-b}$}=0\,.
\quad\qed
$$
\end{proof}

\vspace*{-5.0 mm}

As we have seen in the Lemma \ref{vecC'} the matrix equation (\ref{AXB=C}) has a solution if and only if the last $ml-ab$ coordinates
of the column $c'' = D\,vec\,C\,'$ are zeros, where $D$ is elementary matrix such that {\rm (\ref{Eab})} holds.
Here we obtain the same result without using Kronecker product. The last $m(l-b)$ elements of the column $vec \,C\,'$
are zeros and there are $b$ blocks of $m-a$ zeros. Multiplying by the left column $vec \, C\,'$ with elementary matrix $D$
switches the rows corresponding to this zeros blocks under the blocks $\MLU{C\,'}_{a, b \; \downarrow \,i}$, $1 \leq i \leq b$.
Hence, the last $m(l-b) + (m-a)b = ml-ab$ entries of the column $c''$ are zeros.

\smallskip

Furthermore, we give a new form of the general solution of the matrix equation (\ref{AXB=C}) using $\{1\}$-inverses of the matrices $A$ and $B$.

\break

\noindent
\begin{theorem}
\label{main theorem}
The matrix
\begin{equation}
X
=
A^{(1)}CB^{(1)}+Y-A^{(1)}AYBB^{(1)},
\end{equation}
$Y \in \mathbb{C}^{n \times k}$ is an arbitrary matrix, is the general solution of the matrix equation {\rm (\ref{AXB=C})}
if and only if the $\{1\}$-inverses $A^{(1)}$ and $B^{(1)}$ of the  matrices $A$ and $B$ have the forms {\rm(\ref{A(1)})}
and {\rm(\ref{B(1)})} for arbitrary matrices $U$, $W$, $N$ and $K$ and the entries of the matrices
\begin{equation}
\mbox{$
V{\big (}\MLU{C\,'}_{a, b}-\MLU{Y\,'}_{a, b}{\big )}   + \MLD{Y\,'}_{n-a, b}\, , \; \;
 {\big (}\MLU{C\,'}_{a, b}-\MLU{Y\,'}_{a, b}{\big )}M  + \MRU{Y\,'}_{a, k-b}\, , \; \;
V{\big (}\MLU{C\,'}_{a, b}-\MLU{Y\,'}_{a, b}{\big )}M  + \MRD{Y\,'}_{\,n-a, k-b}
$}
\end{equation}
are mutually independent free parameters, where
\begin{equation}
\mbox{$
QCS
=
C\,'
=
\left[
\begin{array}{c|c}
\MLU{C\,'}_{a, b}  & 0     \\ \hline
0                  & 0
\end{array}
\right]$}
\;\;\;\mbox{\it and}\;\;\;
\mbox{$
P^{-1}YR^{-1} \,=\; Y \;=\;
\left[
\begin{array}{c|c}
\MLU{Y\,'}_{a, b}    & \MRU{Y\,'}_{a, k-b}     \\ \hline
\MLD{Y\,'}_{n-a, b}  & \MRD{Y\,'}_{\,n-a, k-b}
\end{array}
\right]$}.
\end{equation}
\end{theorem}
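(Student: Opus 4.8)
The plan is to collapse the Penrose expression into the canonical block form imposed by the factorizations $QAP=E_A$ and $RBS=E_B$, and then to read off the whole solution set from that form. First I would substitute the Rohde forms (\ref{A(1)}) and (\ref{B(1)}) together with $A=Q^{-1}E_AP^{-1}$ and $B=R^{-1}E_BS^{-1}$ (from (\ref{QAP=Ea}) and (\ref{RBS=Eb})) into the idempotent factors. Using $QQ^{-1}=I$ and $S^{-1}S=I$, a short block computation yields
$$
A^{(1)}A=P\left[\begin{array}{c|c} I_a & 0 \\\hline V & 0 \end{array}\right]P^{-1},
\qquad
BB^{(1)}=R^{-1}\left[\begin{array}{c|c} I_b & M \\\hline 0 & 0 \end{array}\right]R.
$$
The point of this step is that $U$, $W$, $N$, $K$ disappear completely, so that only $V$ and $M$ survive among the inverse-parameters.

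Next I would invoke Lemma \ref{C'=QCS}: since the equation is assumed consistent, $C\,'=QCS$ has every block zero except its top-left corner $\MLU{C\,'}_{a,b}$. Writing $Y\,'=P^{-1}YR^{-1}$ and splitting it into the blocks $\MLU{Y\,'}_{a,b}$, $\MRU{Y\,'}_{a,k-b}$, $\MLD{Y\,'}_{n-a,b}$, $\MRD{Y\,'}_{n-a,k-b}$, a second round of block multiplications produces $A^{(1)}CB^{(1)}$ and $Y-A^{(1)}AYBB^{(1)}$ in block form; adding them and collecting terms gives
$$
X=P\left[\begin{array}{c|c} \MLU{C\,'}_{a,b} & (\MLU{C\,'}_{a,b}-\MLU{Y\,'}_{a,b})M+\MRU{Y\,'}_{a,k-b} \\\hline V(\MLU{C\,'}_{a,b}-\MLU{Y\,'}_{a,b})+\MLD{Y\,'}_{n-a,b} & V(\MLU{C\,'}_{a,b}-\MLU{Y\,'}_{a,b})M+\MRD{Y\,'}_{n-a,k-b} \end{array}\right]R,
$$
whose three off-corner blocks are exactly the matrices named in the statement.

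To finish I would describe the solution set independently. Since $AXB=C$ is equivalent to $E_A\,(P^{-1}XR^{-1})\,E_B=C\,'$, and the operator $E_A(\cdot)E_B$ retains only the top-left $a\times b$ corner, the general solution is precisely $X=PZR$ in which the top-left block of $Z=P^{-1}XR^{-1}$ is forced to equal $\MLU{C\,'}_{a,b}$ while the other three blocks are arbitrary; this is a family of dimension $a(k-b)+(n-a)b+(n-a)(k-b)=nk-ab$. Comparing with the displayed block form of $X$, the Penrose expression sweeps out the entire solution set if and only if its three off-corner blocks can take all values independently. Because $U,W,N,K$ never enter, they may indeed be arbitrary; and because $\MRU{Y\,'}_{a,k-b}$, $\MLD{Y\,'}_{n-a,b}$, $\MRD{Y\,'}_{n-a,k-b}$ each occur once, linearly, and only in their own block, those three blocks are free whatever the values of $V$, $M$ and $\MLU{Y\,'}_{a,b}$ may be.

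The main obstacle will be this final equivalence rather than the algebra. I must argue both directions carefully: that the three blocks realize every matrix of the appropriate size, so that the formula recovers all solutions, and that the resulting $nk-ab$ entries carry no hidden linear relation, so that they form a genuine minimal system of mutually independent free parameters. The block multiplications are routine once the dimensions are tracked, but care is needed to verify that the additive $Y\,'$ contributions exactly absorb the redundant effect of the inverse-parameters, leaving $V$ and $M$ as the only surviving traces of $A^{(1)}$ and $B^{(1)}$ in $X$.
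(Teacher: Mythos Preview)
Your proposal is correct and, for the algebraic reduction, coincides with the paper's proof: both substitute the Rohde forms, use Lemma~\ref{C'=QCS}, and arrive at precisely the same block expression for $X$ in terms of $P$, $R$, $\MLU{C\,'}_{a,b}$, $V$, $M$ and the blocks of $Y\,'$.

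The one genuine difference is the justification of the final equivalence. The paper does not argue directly that the three off-corner blocks parametrize the full solution set; instead it invokes Theorem~\ref{main theorem Kronecker} (the Kronecker-product result) to assert that the general solution must carry exactly $nk-ab$ free parameters, and then concludes that the Penrose formula is general if and only if the three displayed blocks supply that many independent parameters. Your route is more elementary: you identify the solution set intrinsically via the equivalence $AXB=C \Leftrightarrow E_A(P^{-1}XR^{-1})E_B=C\,'$, read off that only the top-left $a\times b$ block of $P^{-1}XR^{-1}$ is constrained, and then match this against the block form of $X$. This avoids the Kronecker machinery of Section~2 entirely and makes the theorem self-contained, whereas the paper's appeal to Theorem~\ref{main theorem Kronecker} ties the two sections together and reuses the parameter count already established there. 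Your observation that $U$, $W$, $N$, $K$ drop out, and that each off-corner block of $Y\,'$ enters additively in exactly one block of $X$, is also more explicit than the paper's treatment.
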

\begin{proof}
Since the $\{1\}$-inverses $A^{(1)}$ and $B^{(1)}$ of the matrices $A$ and $B$ have the forms (\ref{A(1)}) and (\ref{B(1)}),
the solution of the system $X=A^{(1)}CB^{(1)}+Y-A^{(1)}AYBB^{(1)}$ can be represented in the form
$$
\begin{array}{rcl}
X
\!\!&\!\!=\!\!&\!\!
P
\left[
\begin{array}{c|c}
I_{a} & U     \\ \hline
V     & W
\end{array}
\right]
Q C S \left[
\begin{array}{c|c}
I_{b} & M     \\ \hline
N     & K
\end{array}
\right]
R\;+\;Y\;-\;P
\left[
\begin{array}{c|c}
I_{a} & U     \\ \hline
V     & W
\end{array}
\right]
Q A P P^{-1} Y R^{-1} R B S
\left[
\begin{array}{c|c}
I_{b} & M     \\ \hline
N     & K
\end{array}
\right]
R.
\end{array}$$
According to Lemma \ref{C'=QCS} and from (\ref{QAP=Ea}) and (\ref{RBS=Eb}) we have
$$
\begin{array}{rcl}
X
\!\!&\!\!=\!\!&\!\!
P\left[
\begin{array}{c|c}
I_{a} & U     \\ \hline
V     & W
\end {array}
\right]
\left[
\begin{array}{c|c}
\MLU{C\,'}_{a, b} & 0  \\ \hline
0                 & 0
\end{array}
\right]
\left[
\begin{array}{c|c}
I_{b} & M     \\ \hline
N     & K
\end{array}
\right]
R \;\;+\;\; Y                                    \\[2.0 ex]
\!\!&\!\!-\!\!&\!\!
P
\left[
\begin{array}{c|c}
I_{a} & U     \\ \hline
V     & W
\end{array}
\right]
\left[
\begin{array}{c|c}
I_{a} & 0     \\ \hline
0     & 0
\end{array}
\right]
P^{-1}YR^{-1}
\left[
\begin{array}{c|c}
I_{b} & 0     \\ \hline
0     & 0
\end{array}
\right]
\left[
\begin{array}{c|c}
I_{b} & M     \\ \hline
N     & K
\end{array}
\right]
R.
\end{array}
$$
Furthermore, we obtain
\renewcommand{\arraystretch}{1.16} 
$$
\begin{array}{rcl}
X
\!\!&\!\!=\!\!&\!\!
P\left[
\begin{array}{c|c}
 \MLU{C\,'}_{a, b} & \MLU{C\,'}_{a, b}M    \\ \hline
V\MLU{C\,'}_{a, b} &  V\MLU{C\,'}_{a, b}M
\end{array}
\right]R
\;+\;
Y
\;-\;
P\left[
\begin{array}{c|c}
I_{a} & 0     \\ \hline
V     & 0
\end{array}
\right]
P^{-1}YR^{-1}
\left[
\begin{array}{c|c}
I_{b} & M     \\ \hline
0     & 0
\end{array}
\right]R                                                        \\[3.0 ex]
\!\!&\!\!=\!\!&\!\!
P\left(
\left[
\begin{array}{c|c}
 \MLU{C\,'}_{a, b} &    \MLU{C\,'}_{a, b}M    \\ \hline
V\MLU{C\,'}_{a, b} &  V \MLU{C\,'}_{a, b}M
\end{array}
\right]
\;+\;
\,Y\,'
\;-\;
\left[
\begin{array}{c|c}
I_{a} & 0     \\ \hline
V     & 0
\end{array}
\right]
Y\,'
\left[
\begin{array}{c|c}
I_{b} & M     \\ \hline
0     & 0
\end{array}
\right]
\right)R                                                        \\[3.0 ex]
\!\!&\!\!=\!\!&\!\!
P\left(
\left[
\begin{array}{c|c}
  \MLU{C\,'}_{a, b} &  \MLU{C\,'}_{a, b}M   \\ \hline
V \MLU{C\,'}_{a, b} & V\MLU{C\,'}_{a, b}M
\end{array}
\right]
\;+\;
\right.                                               \\[3.0 ex]
\!\!&\!\!+\!\!&\!\!
\left. \;\;\;\;\;\;\left[
\begin{array}{c|c}
\MLU{Y\,'}_{a, b}   & \MRU{Y\,'}_{\,a,k-b}    \\ \hline
\MLD{Y\,'}_{n-a, b} & \MRD{Y\,'}_{\,n-a, k-b}
\end{array}
\right]
\;-\;
\left[
\begin{array}{c|c}
I_{a} & 0     \\ \hline
V     & 0
\end{array}
\right]
\left[
\begin{array}{c|c}
\MLU{Y\,'}_{a, b}   & \MRU{Y\,'}_{\,a,k-b}    \\ \hline
\MLD{Y\,'}_{n-a, b} & \MRD{Y\,'}_{\,n-a, k-b}
\end{array}
\right]
\left[
\begin{array}{c|c}
I_{b} & M     \\ \hline
0     & 0
\end{array}
\right]
\right)R                \\[2.5 ex]
\!\!&\!\!=\!\!&\!\!
P\left(
\left[
\begin{array}{c|c}
 \MLU{C\,'}_{a,b} &  \MLU{C\,'}_{a,b}M        \\ \hline
V\MLU{C\,'}_{a,b} & V\MLU{C\,'}_{a,b}M
\end{array}
\right]
\;+\;
\left[
\begin{array}{c|c}
\MLU{Y\,'}_{a, b}   & \MRU{Y\,'}_{\,a,k-b}    \\ \hline
\MLD{Y\,'}_{n-a, b} & \MRD{Y\,'}_{\,n-a, k-b}
\end{array}
\right]
\;-\;
\left[
\begin{array}{c|c}
 \MLU{Y\,'}_{a, b}  &   \MLU{Y\,'}_{a, b}M     \\ \hline
V\MLU{Y\,'}_{a, b}  &  V\MLU{Y\,'}_{a, b}M
\end{array}
\right]
\right)R \, ,
\end{array}
$$
where $Y'=P^{-1}YR^{-1}$.
We now conclude
\begin{equation}
\label{solution}
X
\;=\;
P
\left[
\begin{array}{c|c}
  \MLU{C\,'}_{a,b}                          &  (\MLU{C\,'}_{a,b} - \MLU{Y\,'}_{a, b})M + \MRU{Y\,'}_{\,a,k-b}           \\ \hline
V(\MLU{C\,'}_{a,b}-\MLU{Y\,'}_{a, b}) + \MLD{Y\,'}_{n-a, b} &  V(\MLU{C\,'}_{a,b} - \MLU{Y\,'}_{a, b})M + \MRD{Y\,'}_{\,n-a, k-b}
\end{array}
\right] R.
\end{equation} \renewcommand{\arraystretch}{1.0} 
$\!\!\!$According to the Theorem \ref{main theorem Kronecker} the general solution of the equation (\ref{AXB=C})
has $nk-ab$ free parameters. Therefore, since the matrices $P$ and $R$ are regular we deduce that
the solution (\ref{solution}) is the general if and only if the entries of the matrices separately
$$
\mbox{$
V{\big (}\MLU{C\,'}_{a, b}-\MLU{Y\,'}_{a, b}{\big )}   + \MLD{Y\,'}_{n-a, b}\, , \; \;
 {\big (}\MLU{C\,'}_{a, b}-\MLU{Y\,'}_{a, b}{\big )}M  + \MRU{Y\,'}_{a, k-b}\, , \; \;
V{\big (}\MLU{C\,'}_{a, b}-\MLU{Y\,'}_{a, b}{\big )}M  + \MRD{Y\,'}_{\,n-a, k-b}
$}
$$
are $nk-ab$ free parameters. $\qed$
\end{proof}
We can illustrate the Theorem \ref{main theorem} on the following two examples.
\begin{example}
\label{Example2}
Consider again the matrix equation from previous example
$$
AXB=C,
$$
where
$A =
\left[
\begin{array}{rr}
1 & 2 \\
0 & 1 \\
1 & 1
\end{array}
\right]$,
$X =
\left[
\begin{array}{rrr}
x_{11} & x_{12} & x_{13}   \\
x_{21} & x_{22} & x_{23}
\end{array}
\right]$,
$B =
\left[
\begin{array}{rrr}
1 & 0 & 0 \\
0 & 1 & 1 \\
1 & 1 & 1
\end{array}
\right]$
and
$C =
\left[
\begin{array}{rrr}
-3 & -6 & -6 \\
-1 & -2 & -2 \\
-2 & -4 & -4
\end{array}
\right]$.

\medskip
We have
$$
C\,'
=
QCS
=
\left[
\begin{array}{rrr}
 1 & 0 & 0    \\
 0 & 1 & 0    \\
-1 & 1 & 1
\end{array}
\right]
\left[
\begin{array}{rrr}
-3 & -6 & -6 \\
-1 & -2 & -2 \\
-2 & -4 & -4
\end{array}
\right]
\left[
\begin{array}{rrr}
1 & 0 & 0 \\
0 & 1 &-1 \\
0 & 0 & 1
\end{array}
\right]=
\left[
\begin{array}{rrr}
-3 & -6 & 0 \\
-1 & -2 & 0 \\
 0 &  0 & 0
\end{array}
\right]
$$
and
$$
\mbox{$\MLU{C\,'}_{2,2}$}
=
\left[
\begin{array}{rr}
-3 & -6  \\
-1 & -2  \\
\end{array}
\right].
$$
The solution of the matrix equation $AXB=C$ is
$$
\begin{array}{lcr}
X
\;=\;
P\left[
\begin{array}{rr|r}
-3 & -6 & \alpha_1 \\
-1 & -2 & \alpha_2
\end{array}
\right]R
\!\!&\!\!=\!\!&\!\!
\left[
\begin{array}{rr}
1 & -2    \\
0 &  1
\end{array}
\right]
\left[
\begin{array}{rr|r}
-3 & -6 & \alpha_1 \\
-1 & -2 & \alpha_2
\end{array}
\right]
\left[
\begin{array}{rrr}
 1 &  0 & 0   \\
 0 &  1 & 0   \\
-1 & -1 & 1
\end{array}
\right]                                                               \\[4ex]
\!\!&\!\!=\!\!&\!\!
\left[
\begin{array}{rrr}
-1-\alpha_1+2\alpha_2 & -2-\alpha_1+2\alpha_2 & \alpha_1-2\alpha_2 \\
-1-\alpha_2           & -2-\alpha_2           & \alpha_2
\end{array}
\right].
\end{array}
$$
\end{example}

\begin{example}
\label{Example3}
We now consider the matrix equation
$$
AXB=C,
$$
where
$A = \left[
\begin{array}{rrr}
1 & 3 & 2 \\
2 & 6 & 4 \\
1 & 3 & 2
\end{array}
\right]$,
$X = \left[
\begin{array}{rr}
x_{11} & x_{12} \\
x_{21} & x_{22} \\
x_{31} & x_{32} \\
\end{array}
\right]$,
$B  = \left[
\begin{array}{rr}
 1 & -3   \\
-2 &  6
\end{array}
\right]$
and
$C=\left[
\begin{array}{rrr}
2  & -6  \\
4  & -12 \\
2  & -6
\end{array}
\right]$.

\medskip
For regular matrices
$Q = \left[
\begin{array}{rrr}
 1 & 0 & 0 \\
-2 & 1 & 0 \\
-1 & 0 & 1
\end{array}
\right]$
and
$P = \left[
\begin{array}{rrr}
1 &-3 &-2 \\
0 & 1 & 0 \\
0 & 0 & 1
\end{array}
\right]$
the following equality
$QAP
=
E_A
=
\!\left[
\begin{array}{ccc}
1 & 0 & 0 \\
0 & 0 & 0 \\
0 & 0 & 0
\end{array}
\right]$
holds. Thus, rank of the matrix $A$ is $a=1$. There are regular matrices
\mbox{$R =\!
\left[
\begin{array}{rr}
\!\! 1 & 0  \\
\!\! 2 & 1
\end{array}
\right]$}
and
$S =\!
\left[
\begin{array}{rrr}
1 & 3 \\
0 & 1
\end{array}
\right]$
such that $RBS=E_B=\left[
\begin{array}{rrr}
1 & 0 \\
0 & 0
\end{array}
\right]$
holds. Thus, rank of the matrix $B$ is $b=1$. Since the ranks of the matrices $A$ and $B$ are $a=b=1$, according to the Lemma \ref{C'=QCS}
all entries of the last column and the last two rows of the matrix $C\,'=QCS$ are zeros, i.e. we get that the matrix $C\,'$ is of the form
$$
C\,'
=
QCS
=
\left[
\begin{array}{rrr}
 1 & 0 & 0 \\
-2 & 1 & 0 \\
-1 & 0 & 1
\end{array}
\right]
\left[
\begin{array}{rrr}
2  & -6  \\
4  & -12 \\
2  & -6
\end{array}
\right]
\left[
\begin{array}{rrr}
1 & 3 \\
0 & 1
\end{array}
\right]=
\left[
\begin{array}{rrr}
2  &  0 \\
0  &  0 \\
0  &  0
\end{array}
\right].
$$
Applying the Theorem \ref{main theorem}, we obtain the general solution of the given matrix equation
$$
\begin{array}{lcl}
X
\;=\;
P\left[
\begin{array}{r|rr}
2         &  \alpha   \\ \hline
\beta_1   &  \gamma_1 \\
\beta_2   &  \gamma_2
\end{array}
\right]R
\!\!&\!\!=\!\!&\!\!
\left[
\begin{array}{rrr}
1 &-3 &-2 \\
0 & 1 & 0 \\
0 & 0 & 1
\end{array}
\right]
\left[
\begin{array}{r|rr}
2         &  \alpha   \\ \hline
\beta_1   &  \gamma_1 \\
\beta_2   &  \gamma_2
\end{array}
\right]
\left[
\begin{array}{rr}
 1 & 0  \\
 2 & 1
\end{array}
\right]                                                               \\[4ex]
\!\!&\!\!=\!\!&\!\!
\left[
\begin{array}{rr}
2+2\alpha-3\beta_1-2\beta_2-6\gamma_1-4\gamma_2   & \alpha-3\gamma_1-2\gamma_2 \\
\beta_1+2\gamma_1                                 & \gamma_1                 \\
\beta_2+2\gamma_2                                 & \gamma_2
\end{array}
\right].
\end{array}
$$
\end{example}

\bigskip

\noindent
{\bf ACKNOWLEDGMENT.}
Research is partially supported by the Ministry of Science and Education of the Republic of Serbia, Grant No.174032.

\bigskip

\end{document}